\newtheorem{theorem}{Theorem}[section]
\newtheorem{cor}[theorem]{Corollary}
\newtheorem{pro}[theorem]{Proposition}
\newcommand{\diag}{\mathop{{ \rm diag}}}
\def \N {{\mathbb N}}
\def \R {{\mathbb R}}
\def \la {{\lambda}}
\author{Priyanka Grover \thanks{The research of this author is supported by a Early Career Research Award ECR/2018/001784 of SERB, India. Email: priyanka.grover@snu.edu.in}, Veer Singh Panwar \thanks{Email: vs728@snu.edu.in}\\
Department of Mathematics\\Shiv Nadar University, Dadri\\ U.P. 201314, India.}
\date{}
\begin{document}

%\linenumbers
\title{\textbf{Bidiagonal decompositions and total positivity of some special matrices}} 
\maketitle
 \begin{center}
\large{\textbf{Abstract}}
 \end{center}
\begin{footnotesize}
 The matrix $S = [1+x_i y_j]_{i,j=1}^{n}, 0<x_1<\cdots<x_n,\, 0<y_1<\cdots<y_n$, has gained importance lately due to its role in powers preserving total nonnegativity. We give an explicit decomposition of $S$ in terms of elementary bidiagonal matrices, which is analogous to the \emph{Neville decomposition}. We give a bidiagonal decomposition of $S^{\circ m}=[(1+x_iy_j)^m]$ for positive integers $1\leq m \leq n-1$. We also explore the total positivity of Hadamard powers of another important class of matrices called \emph{mean matrices}. 
\end{footnotesize}

\textbf{AMS classification:} 15B05,15A23,15B48.

\begin{footnotesize}
		\textbf{Keywords :} Totally positive matrices, totally nonnegative matrices, Hadamard powers, infinitely divisible matrices, bidiagonal decomposition, mean matrices.
	\end{footnotesize}

\begin{center}
\section{Introduction}\label{sec:intro}    
\end{center}
A matrix is called \textit{totally nonnegative} (respectively \textit{totally positive}) if all its minors are nonnegative (respectively positive) (see \cite{fallat2011totally}). 
%Let $\M_n(\R)$ be the set of $n\times n$ matrices over $\R.$ 
These matrices have also been called totally positive (respectively strictly totally
positive), as can be seen in \cite{gasca1996factorizations,karlin1964total,pinkus2010totally}.
 Let $A$ %and $B= [B_{ij}]$ 
 be an $n\times n$ matrix with nonnegative entries. The matrix $A^T$ denotes the \emph{transpose} of $A$. We assume $1 \leq i,j \leq n,$   unless otherwise stated. The $(i,j)$th entry of $A$ is denoted by $A_{ij}$.  %The matrix $A$ is said to be{\em positive  semidefinite} ({\em positive definite}) if $A$ is symmetric and $\langle x,Ax \rangle \geq 0$ for all $x \in \R^n$ ($\langle x,Ax \rangle > 0$ for all $x \in \R^n\setminus\{0\}$). 
Let $1\leq k\leq n$. A matrix is called TN$_k$ (respectively TP$_k$) if all its minors upto order $k$ are nonnegative (respectively positive).
 %The {\em Hadamard product} of $A$ and $B,$ denoted by $A\circ B,$ is defined as $A\circ B = [A_{ij}B_{ij}].$
 If $r > 0$, then the $r$th \emph{Hadamard power} of $A$ is given by $A^{or} = [A_{ij}^r].$  The matrix $A$ is said to be {\em infinitely divisible} if $A^{or}$ is positive semidefinite for every $r > 0.$ %It is obvious that every infinitely divisible matrix is positive semidefinite. The converse, however, need not be true (see \cite{bhatia2006infinitely}). Some simple examples of infinitely divisible matrices are nonnegative positive semidefinite matrices of order $2$ and diagonal matrices with nonnegative diagonal entries. 
 We refer the reader to \cite{bhatia2006infinitely,bhatia2007mean,horn1969theory,panwar2022positivity} for many examples and results on infinitely divisible matrices. 

It was shown in \cite{fitzgerald1977fractional} that if $A$ is positive semidefinite, then for $r \geq n-2$, $A^{\circ  r}$ is also positive semidefinite. The sharpness of the lower bound $n-2$ was given by considering the positive semidefinite matrix $A_\epsilon = [1+\epsilon ij]$, where $\epsilon>0$. It was shown that if $r < n-2$ is any positive non integer, then $A_\epsilon ^{\circ r}$ fails to be positive semidefinite for sufficiently small $\epsilon>0$.

A Lebesgue measurable function $f:\R\rightarrow \R$ is called TN${_k}$ if given any increasing sequences $\{x_m\}_{m\geq 1}$ and $\{y_m\}_{m\geq 1}$ of real numbers, the matrix $\left[f(x_i-y_j)\right]$ is TN$_k$. %totally nonnegative for all real numbers $x_1<\cdots<x_r,\, y_1<\cdots<y_r$ and $1\leq r \leq k$.
Let $W:\mathbb R\rightarrow \mathbb R$ be defined as $$W(x)=\begin{cases}\cos x & \textup{if}\,\, x\in (-\pi/2,\pi/2),\\ 
0 & \textup{otherwise}. \end{cases}$$ Schoenberg \cite{1955} showed that if $r\geq 0$ and $k$ is an integer greater than or equal to $2$, then $W(x)^{r}$ is TN$_k$ if and only if $r\geq k-2$. (See also \cite[Remark 6.2]{khare2020multiply}.)
%Recently, \textcolor{red}{Khare} \cite{khare2020multiply} gave a proof for its `only if' part. 

 A function $f:\R\rightarrow \R$ is called a \emph{P\'{o}lya frequency function} if $f$ is Lebesgue integrable on $\R$, does not vanish at at least two points, and  for $n\in \N$ and real numbers $x_1<\cdots<x_n, y_1<\cdots<y_n$, the matrix $\left[f(x_i-y_j)\right]$ is totally nonnegative. 
Consider the P\'{o}lya frequency function $\Omega:\R\rightarrow \R$ defined as $$\Omega(x):=\begin{cases}
xe^{-x} & \text{if}\,\, x>0,\\
0 & \text{otherwise}.
\end{cases}$$ 
Karlin \cite{karlin1964total} proved that for any integer $k\geq 2$ and any real number $r\geq 0$, the function $\Omega(x)^{r}$ is TN$_k$ if $r$ is a non negative integer or $r\geq k-2$, see also \cite[p. 211]{karlin1968total}. Recently, Khare \cite{khare2020multiply} showed its converse by proving that for any integer $k\geq 2$ and $r\in (0,k-2)\setminus \mathbb Z$, $\Omega(x)^{r}$ is not TN$_k$. 

The characterizations of total nonnegativity of Hadamard powers of a matrix have also recently appeared in \cite{fallat2007hadamard,fallat2017total}.
 %Let $0< x_1, \ldots, x_n$. Let $X=[1+x_ix_j]$. Recently, in \cite{jain2017hadamard}, it was proved that $X^{\circ r}$ is positive semidefinite if and only if $r$ is a nonnegative integer or $r>n-2$, see \cite[Theorem 1.1]{jain2017hadamard}. Let $r>n-2$. Then $X^{\circ r}$ is totally positive (see \cite[Remark 2.5]{jain2017hadamard}). Every totally positive symmetric matrix is positive definite. So $X^{\circ r}$ is positive definite.
Let $x_1,\ldots, x_n$ be distinct positive real numbers. Let $X=[1+x_ix_j]$. Jain \cite[Theorem 1.1]{jain2017hadamard} proved that the matrix $X^{\circ r}$ is positive semidefinite if and only if $r$ is a nonnegative integer or $r>n-2$. So the matrix $X$ serves as a stronger example than the matrix $A_\epsilon$ for proving the sharpness of the lower bound $n-2$, in the sense that it works for every positive non integer $r<n-2$. %She also gave the inertia (the number of positive, negative and zero eigenvalues) of the matrix $X^{\circ r}$ for every $r\geq 0$ and distinct $x_i$'s, see \cite[Theorem 2.6]{jain2017hadamard}. 
She also proved that if $0<x_1<\cdots<x_n$, then the matrix $X^{\circ r}$ is totally positive for $r>n-2$ (see \cite[Theorem 2.4]{jain2017hadamard}). 
For any real numbers $x_1<\cdots<x_n$ and $y_1<\cdots<y_n$ such that $1+x_iy_j>0$, let
$$S=[1+x_i y_j].$$
Recently, Khare \cite[Theorem C]{khare2020multiply} showed that the matrix $S^{\circ r}$ is totally positive if $r>n-2$, and totally nonnegative if and only if $r = 0,1,\ldots,n-2$. Jain~\cite[Corollary 5]{jain2020hadamard} showed that for $r = 0,1,\ldots,n-2$, rank$(S^{\circ r})=r+1$, and therefore, $S^{\circ r}$ is not totally positive. Thus $S^{\circ r}$ is totally positive if and only if $r>n-2$ and $S^{\circ r}$ is totally nonnegative if and only if $r>n-2$ or $r = 0,1,\ldots,n-2$. The matrix $S$ was used to prove the converse of Karlin's result, see \cite[Theorem 1.7]{khare2020multiply}. \par
A matrix $A$ is called \emph{lower} (respectively \emph{upper}) \emph{bidiagonal} if $A_{ij}=0$ for $i-j\neq 0,1$ (respectively for $i-j\neq 0,-1$). For any real number $s$ and positive integers $2 \leq i \leq n $, let $L_{i}(s)$ (respectively $U_i(s)$) be the matrix whose diagonal entries are one, $(i,i-1)$th (respectively $(i-1,i)$th) entry is $s$ and the remaining entries are zero. These particular bidiagonal matrices are called \emph{elementary bidiagonal matrices}. Cryer \cite{cryer1976some} showed that any $n\times n$ totally nonnegative matrix $A$ can be written as \begin{equation}
A=\prod_k L^{(k)} \prod_\ell U^{(\ell)}, \label{cryer}
\end{equation} where $L^{(k)}$ and $U^{(\ell)}$ are, respectively, lower and upper elementary bidiagonal matrices %with at most one non zero entry off the main diagonal, i.e. elementary bidiagonal matrices
(see also \cite{fallat2001bidiagonal}). Careful analyses of the relationships between totally nonnegative matrices and bidiagonal decompositions have been done in \cite{fiedler1997consecutive,gasca1996factorizations}. For more results on bidiagonal decompositions of matrices, see \cite{barreras2012bidiagonal,huang2011bidiagonal,huang2013test,johnson1999elementary}.
%Let $E_{i}^{j}$ be the matrix with $(i,j)$th entry equal to $1$ and remaining zero. 
In particular, in the case of invertible totally nonnegative matrices, the unicity
of the bidiagonal decomposition under certain conditions was assured in
\cite{gasca1996factorizations}. Finding explicit decompositions like \eqref{cryer} is a non trivial task as there may not be obvious patterns to guess the factors. One of the main aims of this paper is to give the decomposition \eqref{cryer} for the matrix $S$,  %, where the factors $L^{(k)}$ are products of the elementary bidiagonal matrices in an interesting pattern
which is similar to what appears in the \emph{successive elementary bidiagonal decomposition} (also called \emph{Neville decomposition}) for invertible totally nonnegative matrices, see \cite[Theorem 2.2.2]{fallat2011totally}. However, the matrix $S$ is not invertible for $n\geq 3$. To find this decomposition, we also give an $LU$ decomposition of $S$. % = I+sE_{i}^{i-1},$ where $ 2 \leq i \leq n $. Matrices of the form $ L_{i}(s) $ are called \textit{elementary bidiagonal matrices.}
We also give another interesting decomposition for $S$ in terms of bidiagonal matrices. The difference with the earlier one is that here the lower and upper bidiagonal matrices appear in a mixed pattern; however, a major advantage is that this decomposition can be generalized to Hadamard integer powers of $S$. \par
%In Section \ref{sec2}, we state and prove our results for the decompositions of $S$.
Another important class of matrices is that of the \emph{mean matrices}. For a discussion on infinite divisibility of these matrices, see \cite{bhatia2007mean}. %A binary operation $m$ on the set of positive real numbers is called a \textit{mean} if  $m(a,b)$ is an increasing and continuous function of $a$ and $b$, $m(a,b) = m(b,a)$, $\min(a,b) \leq m(a,b) \leq \text{max}(a,b)$ and for each $\gamma >0$, $m(\gamma a, \gamma b) = \gamma m(a,b)$. 
Some important examples of means on positive real numbers are the \textit{arithmetic mean} $\mathcal{A}(a,b) = \displaystyle \frac{a+b}{2},$ the \textit{harmonic mean} $H(a,b) = \displaystyle \frac{2ab}{a+b},$  the \textit{Heinz mean} $\mathcal{H}_\nu(a,b) = \displaystyle \frac{a^{v}b^{1-v}+a^{1-v}b^{v}}{2}$ for $0\leq \nu \leq 1$ and the \textit{binomial mean} $\mathcal{B}_\alpha(a,b) = \left(\displaystyle\frac{a^{\alpha}+b^{\alpha}}{2}\right)^{1/\alpha}$ for $-\infty \leq \alpha \leq \infty$, where it is understood that $\mathcal{B}_{0}(a,b) = \sqrt{ab}, \mathcal{B}_{\infty}(a,b) = \text{max}(a,b)$ and $\mathcal{B}_{-\infty}(a,b)=\text{min}(a,b)$. 
 Let $0< \la_{1}< \cdots < \la_{n}.$ % For any given mean $m,$ we call the matrix $W = \left[\frac{1}{m(\la_{i},\la_{j})}\right]$ a \textit{mean matrix} corresponding to the mean $m.$ 
 In \cite{bhatia2007mean}, it is shown that $\left[\displaystyle\frac{1}{\mathcal{A}(\la_i,\la_j)}\right],\left[H(\la_i,\la_j)\right],$ $\left[\displaystyle\frac{1}{\mathcal{H}_\nu(\la_i,\la_j)}\right]$ and $\left[\displaystyle\frac{1}{\mathcal{B}_\alpha (\la_i,\la_j)}\right] (\alpha\geq 0)$ are infinitely divisible.  Since the \emph{Cauchy matrix} $C = \left[\displaystyle\frac{1}{\la_{i}+\la_{j}}\right]$ is totally positive (see \cite{pinkus2010totally}), so are $\left[\displaystyle\frac{1}{\mathcal{A}(\la_i,\la_j)}\right]$, $\left[H(\la_i,\la_j)\right]$ and $\left[\displaystyle\frac{1}{\mathcal{H}_\nu(\la_i,\la_j)}\right]$ $\left(\nu\neq \displaystyle\frac{1}{2}\right)$. %and $\left[\displaystyle\frac{1}{\mathcal{B}_\alpha (\la_i,\la_j)}\right]$($0<\alpha<\infty$). 
 In particular, they are TP$_3$.  By \cite[Theorem 4.2]{johnson2012critical} (or \cite[Theorem 5.2]{fallat2017total}), we have that for $r\geq 1$, their $r$th Hadamard powers  are also TP$_3$. We give a simple proof to show that the Hadamard powers of these matrices are in fact totally positive.

In Section \ref{sec2}, we state and prove our results for the decompositions of $S$.
%and $X^{\circ m} (m\in\mathbb N)$. 
In Section \ref{sec3}, we give the results for mean matrices.\\ %Section \ref{sec4} is devoted to some remarks. 
%To prove Theorem \ref{thm: bidiagonal decomposition}, we first give the following decomposition of $S$:

\section{Bidiagonal decompositions for $S=[1+x_iy_j]$ and its Hadamard powers}\label{sec2}
We begin by giving an $LU$ decomposition for $S$.
\begin{pro} \label{LL^* for S}
Let $0<x_1<\cdots<x_n$ and $0<y_1<\cdots<y_n$. Then the matrix $S=[1+x_iy_j]$ can be written as $L U$, where $L$ and $U$ are the lower and upper  triangular matrices, respectively, given by
 \begin{align*}
  L_{ij}&=\begin{cases}
 \displaystyle \frac{1+y_1x_i}{\sqrt{1+x_1y_1}} & \text{if}\,\,j=1,\\[12pt]
  \displaystyle \frac{(x_i-x_1)\sqrt{y_2-y_1}}{\sqrt{x_2-x_1}\sqrt{1+x_1y_1}} & \text{if}\,\,j=2,\\[12pt]
  0 & \text{otherwise}
  \end{cases} \intertext{and}
  U_{ij}&=\begin{cases}
  \displaystyle \frac{1+x_1y_j}{\sqrt{1+x_1y_1}} & \text{if}\,\,i=1,\\[12pt]
  \displaystyle \frac{(y_j-y_1)\sqrt{x_2-x_1}}{\sqrt{y_2-y_1}\sqrt{1+x_1y_1}} & \text{if}\,\,i=2,\\[12pt]
  0 & \text{otherwise}.
  \end{cases}
 \end{align*}
 \end{pro}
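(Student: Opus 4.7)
The plan is to verify the decomposition by direct matrix multiplication, exploiting the fact that both $L$ and $U$ are extremely sparse. The matrix $L$ has nonzero entries only in its first two columns, and $U$ has nonzero entries only in its first two rows. Consequently, for every $i,j$, the product simplifies to just two terms:
\begin{equation*}
(LU)_{ij} = \sum_{k=1}^{n} L_{ik}U_{kj} = L_{i1}U_{1j} + L_{i2}U_{2j}.
\end{equation*}
I would therefore compute these two terms separately and show that their sum equals $1+x_iy_j$.

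First, I would compute $L_{i1}U_{1j} = \dfrac{(1+x_iy_1)(1+x_1y_j)}{1+x_1y_1}$ directly. Then for the second term, the square-root factors $\sqrt{x_2-x_1}$ and $\sqrt{y_2-y_1}$ in $L_{i2}$ and $U_{2j}$ cancel perfectly (which is precisely why the decomposition is written with these symmetrically placed radicals), yielding $L_{i2}U_{2j} = \dfrac{(x_i-x_1)(y_j-y_1)}{1+x_1y_1}$. Both fractions share the common denominator $1+x_1y_1$, so the remaining task is the algebraic identity
\begin{equation*}
(1+x_iy_1)(1+x_1y_j) + (x_i-x_1)(y_j-y_1) = (1+x_1y_1)(1+x_iy_j).
\end{equation*}
Expanding both sides shows that the cross terms $x_1y_j$, $x_iy_1$ in the first product cancel against $-x_1y_j$, $-x_iy_1$ in the second, leaving $1 + x_1y_1 + x_iy_j + x_1y_1x_iy_j$, which factors as the right-hand side.

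Conceptually, the identity is no surprise: since $S = \mathbf{1}\mathbf{1}^T + \mathbf{x}\mathbf{y}^T$ where $\mathbf{x}=(x_1,\dots,x_n)^T$ and $\mathbf{y}=(y_1,\dots,y_n)^T$, the matrix $S$ has rank at most $2$, and the factorization is effectively rewriting this rank-$2$ structure in terms of the two ``pivot-aligned'' vectors that appear when one performs Gaussian elimination starting from the $(1,1)$ entry. The hypothesis $0<x_1<\cdots<x_n$, $0<y_1<\cdots<y_n$ guarantees the positivity of $1+x_1y_1$, $x_2-x_1$, and $y_2-y_1$, so that the square roots in the definitions of $L$ and $U$ make sense; no deeper obstacle arises, and the proof reduces to this short calculation.
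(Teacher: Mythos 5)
Your proof is correct and is essentially the same as the paper's: both verify the factorization by direct computation of $(LU)_{ij}=L_{i1}U_{1j}+L_{i2}U_{2j}$ and the algebraic identity $(1+x_iy_1)(1+x_1y_j)+(x_i-x_1)(y_j-y_1)=(1+x_1y_1)(1+x_iy_j)$. Your added remarks about the rank-$2$ structure and the role of the positivity hypotheses are sound but supplementary.
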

 
 \begin{proof}
 This can be proved by checking that the $(i,j)$th entry of $LU$: \begin{align*}
 (LU)_{ij}&= \sum_{k=1}^{2}L_{ik} U_{kj}\\
 &=\left(\frac{1+y_1x_i}{\sqrt{1+x_1y_1}}\right)\left(\frac{1+x_1y_j}{\sqrt{1+x_1y_1}}\right)+\\
 &\quad \left(\frac{(x_i-x_1)\sqrt{y_2-y_1}}{\sqrt{x_2-x_1}\sqrt{1+x_1y_1}}\right)\left(\frac{(y_j-y_1)\sqrt{x_2-x_1}}{\sqrt{y_2-y_1}\sqrt{1+x_1y_1}}\right)\\  
 &= \frac{1}{(1+x_1y_1)} \left[(1+y_1x_i)(1+x_1y_j)+(x_i-x_1)(y_j-y_1)\right]\\ 
 %&= \frac{1}{(1+x_1y_1)}\left[(1+x_iy_j)(1+x_1y_1)\right]\\
 &= 1+x_iy_j.
\end{align*}
\end{proof}

Let $\text{diag}[d_i]$ denote the diagonal matrix with diagonal entries $d_{1},\ldots,d_n.$
Now, we give the decomposition \eqref{cryer} for $S$. %We also note that the decomposition is quite similar to the \emph{successive elementary bidiagonal} (also called \emph{Neville decomposition}) for invertible totally nonnegative matrices, 

%which  characterizes an invertible totally nonnegative matrix in terms of elementary bidiagonal matrices is given in

%see \cite[Theorem 2.2.2]{fallat2011totally}. The matrix $S$, however, is not invertible for $n\geq 3$. 

\begin{theorem}  \label{thm: bidiagonal decomposition}
Let $n\geq 2$. Let $0<x_1<\cdots<x_n$ and $0<y_1<\cdots<y_n$. For $2\leq i\leq n,$ let $\alpha_{i} = \displaystyle \frac{1+y_1x_i}{1+y_1x_{i-1}}$ and $\alpha_{i}^{'}= \dfrac{1+x_1y_i}{1+x_1y_{i-1}}$. For $3\leq j \leq n$, let \begin{align*}
    \beta_{j}&=\dfrac{(x_j-x_{j-1})(1+y_1x_{j-2})}{(x_{j-1}-x_{j-2})(1+y_1x_{j-1})}\intertext{and}
\beta_{j}^{'}&=\frac{(y_j-y_{j-1})(1+x_1y_{j-2})}{(y_{j-1}-y_{j-2})(1+x_1y_{j-1})}. \end{align*} Let $$D = \left[\begin{array}{ccccc}1+x_1y_1 & 0 & 0&\cdots& 0\\ 0& \dfrac{(x_2-x_1)(y_2-y_1)}{1+x_1y_1}&0&\cdots&0\\
0&0&0&\cdots& 0\\
\vdots &\vdots &\vdots &\ddots & \vdots \\
0&0&0&\cdots& 0 \end{array}\right].$$ Then 
\begin{align} \label{thm1.2}
S&=\left(L_n(\alpha_n)\cdots L_2(\alpha_2)\right)\left(L_n(\beta_n)\cdots L_3(\beta_3)\right)D\left(U_3(\beta_{3}^{'})\cdots U_n(\beta_{n}^{'})\right)\nonumber\\
&\quad \left(U_2(\alpha_2^{'})\cdots U_n(\alpha_{n}^{'})\right).
\end{align}

% \begin{bmatrix}1+x_1^2 &  & & & \\&\frac{(x_2-x_1)^2}{1+x_1^2} & & & \\& & 0 & & \\& & & \ddots & \\& & & & 0\end{bmatrix}
\end{theorem}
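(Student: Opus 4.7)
The plan is to obtain \eqref{thm1.2} by starting from the $LU$ factorization of Proposition \ref{LL^* for S} and further splitting each of $L$ and $U$ into elementary bidiagonal pieces. The key structural feature is that the $L$ of Proposition \ref{LL^* for S} has only its first two columns nonzero, and $U$ only its first two rows; correspondingly $D$ has rank $2$ and splits as $D=D_L D_U$ with $D_L,D_U$ diagonal, supported only on the entries $(1,1),(2,2)$, and satisfying $(D_L)_{11}(D_U)_{11}=1+x_1y_1$ and $(D_L)_{22}(D_U)_{22}=(x_2-x_1)(y_2-y_1)/(1+x_1y_1)$ (e.g.\ the symmetric square-root split). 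The goal then reduces to proving the two identities $L=PD_L$ and $U=D_U Q$, where $P=L_n(\alpha_n)\cdots L_2(\alpha_2) L_n(\beta_n)\cdots L_3(\beta_3)$ and $Q=U_3(\beta'_3)\cdots U_n(\beta'_n) U_2(\alpha'_2)\cdots U_n(\alpha'_n)$ are the two flanking bidiagonal products.

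Since $L$ is supported on columns $1,2$ and $D_L$ on its first two diagonal entries, $L=PD_L$ amounts to matching just the first two columns of $P$ against those of $L$, up to the scalars $(D_L)_{11},(D_L)_{22}$. Writing $P=A_\alpha B_\beta$ with $A_\alpha=L_n(\alpha_n)\cdots L_2(\alpha_2)$ and $B_\beta=L_n(\beta_n)\cdots L_3(\beta_3)$, a straightforward inductive unfolding gives the closed forms
\begin{equation*}
(A_\alpha)_{ik}=\alpha_{k+1}\cdots\alpha_i=\frac{1+y_1 x_i}{1+y_1 x_k}\quad(1\le k\le i),
\end{equation*}
and $(B_\beta)_{ik}=\beta_{k+1}\cdots\beta_i$ for $2\le k\le i$, which by telescoping of the defining ratios simplifies to $\frac{(x_i-x_{i-1})(1+y_1 x_{k-1})}{(x_k-x_{k-1})(1+y_1 x_{i-1})}$. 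The first column is then immediate: $P_{i1}=\alpha_2\cdots\alpha_i=(1+y_1 x_i)/(1+y_1 x_1)$, which matches $L_{i1}/(D_L)_{11}$.

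The main obstacle, and the computational heart of the argument, is the second column of $P$, where $A_\alpha$ and $B_\beta$ genuinely interact. Expanding $P_{i2}=\sum_{k=2}^{i}(A_\alpha)_{ik}(B_\beta)_{k2}$ and substituting the closed forms above, pulling out common factors reduces the problem to evaluating $\sum_{k=3}^{i}(x_k-x_{k-1})/[(1+y_1 x_k)(1+y_1 x_{k-1})]$. The decisive step is the partial-fraction identity
\begin{equation*}
\frac{x_k-x_{k-1}}{(1+y_1 x_k)(1+y_1 x_{k-1})}=\frac{1}{y_1}\left(\frac{1}{1+y_1 x_{k-1}}-\frac{1}{1+y_1 x_k}\right),
\end{equation*}
which collapses the sum into a telescope and, after elementary algebraic simplification of the resulting rational expression, produces $P_{i2}=(x_i-x_1)/(x_2-x_1)$, matching $L_{i2}/(D_L)_{22}$.

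Finally, by the evident symmetry of the statement under swapping the roles of the $x$'s and $y$'s (so that $\alpha_i,\beta_i$ go to $\alpha'_i,\beta'_i$ and $L$ goes to $U^T$), the same argument applied on the other side yields $U=D_U Q$. Multiplying the two factorizations then gives $S=LU=PD_L D_U Q=PDQ$, which is exactly \eqref{thm1.2}.
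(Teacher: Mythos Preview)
Your proof is correct and follows essentially the same route as the paper: both arguments compute the closed forms of the two bidiagonal products $L_n(\alpha_n)\cdots L_2(\alpha_2)$ and $L_n(\beta_n)\cdots L_3(\beta_3)$ (the paper's $M$ and $N$), multiply them using the identical telescoping partial-fraction identity, and then match against the $L$ of Proposition~\ref{LL^* for S} via a diagonal split of $D$ (the paper uses the symmetric square root $\sqrt{D}$). The only cosmetic difference is that you exploit $\operatorname{rank} D = 2$ to check just the first two columns of $P$, whereas the paper computes the full lower-triangular product $Y_1 = MN$ before restricting.
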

\begin{proof}
Let the lower triangular matrices $M=[M_{ij}],M^{'}=[M^{'}_{ij}],N=[N_{ij}],N^{'}=[N^{'}_{ij}],Y_1=[(Y_1)_{ij}]$ and $Y_2=[(Y_2)_{ij}]$ be defined as follows:
\begin{align*}
    M_{ij} &= \begin{cases}
    \displaystyle \frac{1+y_1x_i}{1+y_1x_j} & \text{if}\,\, i \geq j,\\[12pt]
    0 & \text{otherwise},
    \end{cases}\\
    M^{'}_{ij} &= \begin{cases}
    \displaystyle \frac{1+x_1y_i}{1+x_1y_j} & \text{if}\,\, i \geq j,\\[12pt]
    0 & \text{otherwise},
    \end{cases}
\end{align*}
\begin{align*}
    N_{ij} &=  \begin{cases}
    1 &\text{if}\,\, i= j=1,\\
    \displaystyle \frac{(x_i-x_{i-1})(1+y_1x_{j-1})}{(x_j-x_{j-1})(1+y_1x_{i-1})} &\text{if}\,\, i \geq j\geq 2,\\[12pt]
    0 & \text{otherwise},
    \end{cases}\\
    N^{'}_{ij} &=  \begin{cases}
    1 &\text{if}\,\, i= j=1,\\
    \displaystyle \frac{(y_i-y_{i-1})(1+x_1y_{j-1})}{(y_j-y_{j-1})(1+x_1y_{i-1})} &\text{if}\,\, i \geq j\geq 2,\\[12pt]
    0 & \text{otherwise},
    \end{cases}\\
(Y_1)_{ij} &= \begin{cases}
 \displaystyle \frac{1+y_1x_i}{1+y_1x_1} &\text{if}\,\,j=1,\\[12pt]
 \displaystyle \frac{x_i-x_{j-1}}{x_j-x_{j-1}} &\text{if}\,\, i\geq j\geq 2,\\[12pt]
 0 &\text{otherwise}
 \end{cases}\\ \intertext{and}
 (Y_2)_{ij} &= \begin{cases}
 \displaystyle \frac{1+x_1y_i}{1+x_1y_1} &\text{if}\,\, j=1,\\[12pt]
 \displaystyle \frac{y_i-y_{j-1}}{y_j-y_{j-1}} &\text{if}\,\, i\geq j\geq 2,\\[12pt]
 0 &\text{otherwise}.
\end{cases}
\end{align*}

Let $P_i = L_i\left(\alpha_{i}\right),P_i^{'} = L_i(\alpha_{i}^{'}),Q_j = L_{j}(\beta_{j})$ and $Q_j^{'} = L_{j}(\beta_{j}^{'})$ for $2\leq i \leq n$ and $3\leq j\leq n$. To prove the theorem, we show the following: 

\begin{enumerate}[(a)]
    \item $P_n P_{n-1}\cdots P_2=M$ and $P_n^{'} P_{n-1}^{'}\cdots P_2^{'}=M^{'}$.  \label{eqn: def of U}
    \item  $Q_{n}Q_{n-1}\cdots Q_3 = N$ and $Q_{n}^{'}Q_{n-1}^{'}\cdots Q_3^{'} = N^{'}$. \label{eqn: def of V}
    \item $MN=Y_1$ and $M^{'}N^{'}= Y_{2}$. \label{eqn: MN=Y}
    \item $S = Y_1DY_2^{T}.$ \label{eqn: X=YDY^T}
\end{enumerate}

We shall give a proof for the first part of each of \ref{eqn: def of U}, \ref{eqn: def of V} and \ref{eqn: MN=Y}. The proofs of their second parts are analogous. So we omit them.

Note that for $i\geq 2,$ multiplying any matrix by $L_{i}(s)$ on the left is equivalent to changing its $i$th row to the one obtained by adding $s$ times the $(i-1)$th row to it. %It will change only the $i$th row of the matrix, and all the other rows will remain the same. 
Let $\delta_{ij} = 1$ for $i=j$, and $0$ otherwise. To prove \ref{eqn: def of U}, we show that for $2\leq k \leq n$,
\begin{align} \label{eqn: induction assumption}
(P_{k}\cdots P_2)_{ij} = \begin{cases}
M_{ij} &\text{if}\,\, i\leq k,\\
\delta_{ij} &\text{if}\,\, i>k.
\end{cases}    
\end{align}

 %We prove this by applying induction on $k.$ 
 Let $I$ be the identity matrix of order $n$. Then for $i\neq 2$, $$(P_2)_{ij}=(P_2I)_{ij}= \delta_{ij}.$$ Note that $M_{1j}=\delta_{1j}$. We also have
\begin{align*}
 (P_2)_{2j} &= (P_2I)_{2j}\\
&= I_{2j}+\left(\frac{1+y_1x_2}{1+y_1x_1}\right)I_{1j}\\
&= \delta_{2j}+\left(\frac{1+y_1x_2}{1+y_1x_1}\right)\delta_{1j}\\
&= \begin{cases}
\displaystyle \frac{1+y_1x_2}{1+y_1x_1} &\text{if}\,\, j=1,\\[10pt]
1 &\text{if}\,\, j=2,\\
0 & \text{otherwise}
\end{cases}\\
&= M_{2j}.  
\end{align*}
Hence (\ref{eqn: induction assumption}) holds for $k=2$. Let it hold for $k = m$, where $2 \leq m \leq n-1$. Then   
\begin{align*}
    (P_{m}\cdots P_2)_{ij} = \begin{cases}
M_{ij} &\text{if}\,\, i\leq m,\\
\delta_{ij} &\text{if}\,\, i> m.
\end{cases}
\end{align*}

\noindent So for $i\leq m,\,
(P_{m+1} \cdots P_2)_{ij} = (P_m \cdots P_2)_{ij} = M_{ij}.$ For $i>m+1$,
 $$(P_{m+1} \cdots P_2)_{ij}\\ = (P_m\cdots P_2)_{ij} = \delta_{ij}.$$
 Also,
\begin{align*}
    (P_{m+1}\cdots P_2)_{m+1,j} &= (P_m \cdots P_2)_{m+1,j} + \left(\frac{1+y_1x_{m+1}}{1+y_1x_m}\right) (P_m \cdots P_2)_{mj}\\
&= \delta_{m+1,j}+\left(\frac{1+y_1x_{m+1}}{1+y_1x_m}\right)M_{mj}\\
%&= \begin{cases}\left(\frac{1+y_1x_{m+1}}{1+y_1x_j}\right) &\text{if}\,\, j\leq m,\\1 &\text{if}\,\, j=m+1,\\0 &\text{otherwise}\end{cases}\\
&= M_{m+1,j}.
\end{align*}
\noindent Hence $$(P_{m+1}\cdots P_2)_{ij} = \begin{cases}
M_{ij} &\text{if}\,\, i\leq m+1,\\
\delta_{ij} &\text{if}\,\, i> m+1.
\end{cases}$$

Thus \eqref{eqn: induction assumption} holds for $k=m+1$. Hence \eqref{eqn: induction assumption} is true for every $2\leq k \leq n.$ Putting $k=n$ proves the first part of \ref{eqn: def of U}. To prove \ref{eqn: def of V}, we show that for $3\leq k\leq n$,
\begin{align} \label{eqn: induction assumption for V}
(Q_{k}\cdots Q_3)_{ij} = \begin{cases}
N_{ij} &\text{if}\,\, i\leq k,\\
\delta_{ij} &\text{if}\,\, i>k.
\end{cases}    
\end{align}

We prove this in a similar manner as above. Since $Q_3 = Q_3I,$ $(Q_3)_{ij}=(Q_3I)_{ij}= \delta_{ij}$ for $i\neq 3.$ Note that for $i=1$ and $i=2$, $N_{ij}=\delta_{ij}$. Now
\begin{align*}
(Q_3)_{3j}&=  I_{3j}+\left(\frac{(x_3-x_2)(1+y_1x_1)}{(x_2-x_1)(1+y_1x_2)}\right)I_{2j}\\
&= \delta_{3j}+\left(\frac{(x_3-x_2)(1+y_1x_1)}{(x_2-x_1)(1+y_1x_2)}\right)\delta_{2j}\\
%&= \begin{cases}\dfrac{(x_3-x_2)(1+y_1x_1)}{(x_2-x_1)(1+y_1x_2)} &\text{if}\,\, j=2,\\1 &\text{if}\,\, j=3,\\0 & \text{otherwise}\end{cases}\\
& = N_{3j}.
\end{align*}
%Hence (\ref{eqn: induction assumption for V}) holds for $k=3.$ 
Suppose \eqref{eqn: induction assumption for V} holds for $k = m$, where $3 \leq m \leq n-1$. Then $$(Q_{m}\cdots Q_3)_{ij} = \begin{cases}
N_{ij} &\text{if}\,\, i\leq m,\\
\delta_{ij} &\text{if}\,\, i> m.
\end{cases}$$
So for $i\leq m$, $$(Q_{m+1} \cdots Q_3)_{ij} = (Q_m \cdots Q_3)_{ij} = N_{ij}.$$ For $i>m+1$, $$(Q_{m+1} \cdots Q_3)_{ij}= \left(Q_m\cdots Q_3\right)_{ij} = \delta_{ij}.$$ 
Also,
\begin{align*}
    (Q_{m+1}\cdots Q_3)_{m+1,j} &= (Q_m \cdots Q_3)_{m+1,j}\\
    &\quad \quad \quad + 
    \left(\frac{(x_{m+1}-x_{m})(1+y_1x_{m-1})}{(x_{m}-x_{m-1})(1+y_1x_m)}\right)(Q_m \cdots Q_3)_{mj}\\
&= \delta_{m+1,j}+\left(\frac{(x_{m+1}-x_{m})(1+y_1x_{m-1})}{(x_{m}-x_{m-1})(1+y_1x_m)}\right)N_{mj}\\
%&= \begin{cases}0 &\text{if}\,\, j=1,\\\dfrac{(x_{m+1}-x_m)(1+y_1x_{j-1})}{(x_j-x_{j-1})(1+y_1x_m)} &\text{if}\,\, 2\leq j\leq m+1,\\0&\text{otherwise}\end{cases}\\
&= N_{m+1,j}.
\end{align*}
Therefore $$(Q_{m+1}\cdots Q_3)_{ij} = \begin{cases}
N_{ij} &\text{if}\,\, i\leq m+1,\\
\delta_{ij} &\text{if}\,\, i> m+1.
\end{cases}$$
So \eqref{eqn: induction assumption for V} is true for every $3\leq k \leq n$. Putting $k=n$ proves the first part of \ref{eqn: def of V}. Now for each $i$,
\begin{align*}
     (MN)_{i1}%= \sum_{k=1}^{n}M_{ik}N_{k1}
     =M_{i1}N_{11}= \left(\frac{1+y_1x_i}{1+y_1x_1}\right)=(Y_{1})_{i1}. 
 \end{align*}
 For $i<j$,
 \begin{align*}
     (MN)_{ij}%= \sum_{k=1}^{n}M_{ik}N_{kj} 
     =0=(Y_{1})_{ij}.
 \end{align*}
For $1 < j \leq i,$
    \begin{align*}
    (MN)_{ij} &= \sum_{k=1}^{n}M_{ik}N_{kj}\\
    &= \sum_{i\geq k \geq j}\left(\frac{1+y_1x_i}{1+y_1x_k}\right)\left(\frac{(x_k-x_{k-1})(1+y_1x_{j-1})}{(x_j-x_{j-1})(1+y_1x_{k-1})}\right)\\
    &=\frac{(1+y_1x_i)(1+y_1x_{j-1})}{y_1(x_j-x_{j-1})}\sum_{i\geq k\geq j}\left[\frac{1}{(1+y_1x_{k-1})}-\frac{1}{(1+y_1x_{k})}\right]\\
    &=\frac{(1+y_1x_i)(1+y_1x_{j-1})}{y_1(x_j-x_{j-1})}\left[\frac{1}{(1+y_1x_{j-1})}-\frac{1}{(1+y_1x_{i})}\right]\\
    &=\frac{x_i-x_{j-1}}{x_j-x_{j-1}}\\
    &=(Y_{1})_{ij}.
    \end{align*}
    This proves the first part of \ref{eqn: MN=Y}.
    
Let $\sqrt{D}= \text{diag}[\sqrt{D_{ii}}].$ Let $L$ and $U$ be the lower and upper triangular matrices, respectively, in Proposition \ref{LL^* for S}. To prove \ref{eqn: X=YDY^T}, we note that $Y_1\sqrt{D}= L$ and $\sqrt{D}Y_2^T=U$. Since $LU=S$, we have $Y_1DY_2^{T}=(Y_1\sqrt{D})(\sqrt{D}Y_2^{T}
)=S$. This completes our proof.
\end{proof}

In particular, if $x_i = y_i =i$ for $1\leq i\leq n$, then we have the following corollary.

\begin{cor} We have \begin{equation}X = \left[1+ij\right]= ZDZ^T,\label{eqn2}\end{equation} where 
\begin{align*}
Z &= \left[L_n\left(\frac{n+1}{n}\right)L_{n-1}\left(\frac{n}{n-1}\right)\cdots L_2\left(\frac{3}{2}\right)\right]\left[L_n\left(\frac{n-1}{n}\right)L_{n-1}\left(\frac{n-2}{n-1}\right)\right.\\
& \quad\left.\cdots L_3\left(\frac{2}{3}\right)\right] \text{ and } D =\left[\begin{array}{ccccc}2 & 0 & 0 &\ldots & 0\\
0& \frac{1}{2}& 0 &\ldots& 0\\
\ &\ &\ddots &\ &\ \\
0 & 0 & \ldots& \ldots & 0\end{array}\right].
\end{align*}
%, where $d_1 = 2, d_2=\frac{1}{2}$ and $d_i=0$ for $i\geq 3$.

%\begin{bmatrix}2 &  & & & \\&\frac{1}{2} & & & \\& & 0 & & \\& & & \ddots & \\& & & & \end{bmatrix}
\end{cor}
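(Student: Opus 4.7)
The proposal is to obtain the corollary as a direct specialization of Theorem \ref{thm: bidiagonal decomposition} to the values $x_i = y_i = i$, $1 \leq i \leq n$. First I would verify that the hypotheses are met: $0 < 1 < 2 < \cdots < n$ and these values satisfy $0 < x_1 < \cdots < x_n$, $0 < y_1 < \cdots < y_n$, so Theorem \ref{thm: bidiagonal decomposition} applies directly to $S = X = [1+ij]$.

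Next I would substitute into the formulas for the scalars. With $x_i = y_i = i$, the definition of $\alpha_i$ yields
\begin{equation*}
\alpha_i = \frac{1 + 1 \cdot i}{1 + 1\cdot(i-1)} = \frac{i+1}{i}, \qquad 2 \leq i \leq n,
\end{equation*}
matching the coefficients $\tfrac{3}{2}, \tfrac{4}{3}, \ldots, \tfrac{n+1}{n}$ in the first bracketed factor of $Z$. Similarly, for $3 \leq j \leq n$,
\begin{equation*}
\beta_j = \frac{(j-(j-1))(1 + (j-2))}{((j-1)-(j-2))(1+(j-1))} = \frac{j-1}{j},
\end{equation*}
matching the coefficients $\tfrac{2}{3}, \tfrac{3}{4}, \ldots, \tfrac{n-1}{n}$ in the second bracketed factor of $Z$. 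The diagonal matrix in Theorem \ref{thm: bidiagonal decomposition} has $D_{11} = 1 + x_1 y_1 = 2$ and $D_{22} = \tfrac{(x_2-x_1)(y_2-y_1)}{1+x_1y_1} = \tfrac{1}{2}$, with all other diagonal entries $0$, which is exactly the $D$ in the statement.

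The remaining point is to identify the right-hand bidiagonal factors as $Z^T$. Because $x_i = y_i = i$, we have $\alpha_i' = \alpha_i$ and $\beta_j' = \beta_j$ for every relevant index; moreover $U_i(s) = L_i(s)^T$ by definition of the elementary bidiagonal matrices. Taking the transpose of $Z$ reverses the order of the factors, so
\begin{equation*}
Z^T = \bigl(U_3(\beta_3)\cdots U_n(\beta_n)\bigr)\bigl(U_2(\alpha_2)\cdots U_n(\alpha_n)\bigr),
\end{equation*}
which matches the last two bracketed products in \eqref{thm1.2}. Substituting everything into \eqref{thm1.2} gives $X = ZDZ^T$, completing the proof. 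There is no real obstacle here; the only mildly subtle point worth stating clearly is the symmetry step — that $x_i = y_i$ forces the unprimed and primed scalars to agree and hence the right-hand factors in \eqref{thm1.2} to assemble into $Z^T$ — which is what makes the elegant $ZDZ^T$ form possible.
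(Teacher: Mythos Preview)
Your proposal is correct and is exactly the approach the paper takes: the corollary is stated immediately after Theorem~\ref{thm: bidiagonal decomposition} with the words ``In particular, if $x_i = y_i = i$\ldots'' and no separate proof is given. Your explicit verification of $\alpha_i = \tfrac{i+1}{i}$, $\beta_j = \tfrac{j-1}{j}$, the diagonal entries of $D$, and the symmetry argument $\alpha_i' = \alpha_i$, $\beta_j' = \beta_j$, $U_i(s) = L_i(s)^T$ yielding the $ZDZ^T$ form, simply spells out what the paper leaves implicit.
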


In the next theorem, we give a bidiagonal decomposition of the $m$th Hadamard powers of $S$ for $m \in \{1,2, \ldots, n-1\}$. For distinct real numbers $x_1,\ldots,x_n$ and $1\leq k \leq n-1$, let the lower bidiagonal matrices $L^{\mathbf x(k)}$ and upper bidiagonal matrices $U^{\mathbf x(k)}$ be defined as
\begin{align*}
(L^{\mathbf{x}(k)})_{ij}&=\begin{cases}
    1 & \text{if}\,\, i=j,\\
    1 & \text{if}\,\, i=j+1,\,i= n-k+1,\\
    \prod\limits_{t=0}^{k-n+i-2}\displaystyle \frac{x_{i}-x_{i-1-t}}{x_{i-1}-x_{i-2-t}} & \text{if}\,\, i=j+1,\,i> n-k+1,\\
    0 & \text{otherwise},
\end{cases}
\end{align*}
and \begin{align*}
(U^{\mathbf{x}(k)})_{ij}&=\begin{cases}
    1 & \text{if}\,\, i=j,\,i\leq n-k,\\
    x_{i}-x_{n-k} & \text{if}\,\, i=j,\,i> n-k,\\
    x_1 & \text{if}\,\, i=j-1, \,i= n-k,\\
    x_{k-n+i+1}\prod\limits_{t=1}^{k-n+i} \displaystyle \frac{x_{i}-x_{i-t}}{x_{i+1}-x_{i+1-t}} & \text{if}\,\, i=j-1, \,i> n-k,\\
    0 & \text{otherwise}.
\end{cases}
\end{align*}

\begin{theorem} \label{thm: bid decomp Hdmrd pow}
Let $n\geq 2$. Let $m\in \{1,\ldots, n-1\}$. Let $x_1,\ldots,x_n$ and $y_1,\ldots, y_n$ be distinct real numbers. Let $$D_m=\left[\begin{array}{ccccccc}1 &\ &\ &\ &\ &\ & \ \\ \ & \binom{m}{1} & \ & \ & \ &\ &\ \\ \ &\ & \binom{m}{2}& \ &\ &\ & \ \\ \ &\ & \ & \ddots &\ &\ & \ \\
\ &\ & \ & \ &\binom{m}{m-1}  &\ & \ \\ \ &\ & \ & \ &\  &1 & \ \\ \ &\ & \ & \ &\  & &\boldsymbol 0_{n-m-1}\end{array}\right],$$ where $\binom{m}{i}$ denotes the binomial coefficient and $\boldsymbol 0_{n-m-1}$ is the zero matrix of order $n-m-1$. 
Then 
\begin{align*}
S^{\circ m} &= \left(L^{\mathbf{x}(1)}\cdots L^{\mathbf{x}(n-1)}U^{\mathbf{x}(n-1)}\cdots U^{\mathbf{x}(1)}\right)D_m\left(L^{\mathbf{y}(1)}\cdots L^{\mathbf{y}(n-1)}U^{\mathbf{y}(n-1)}\cdots \right. 
\\ 
& \left. \quad \,\, U^{\mathbf{y}(1)}\right)^{T}.
\end{align*}
\end{theorem}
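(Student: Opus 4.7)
The plan is to reduce the theorem to a bidiagonal factorization of the Vandermonde matrix and then invoke the binomial theorem.

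\textbf{Reduction.} The binomial expansion
$$(1+x_iy_j)^m = \sum_{k=0}^m \binom{m}{k}\, x_i^k\, y_j^k$$
rewrites the Hadamard power as the matrix product
$$S^{\circ m} = V(\mathbf{x})\, D_m\, V(\mathbf{y})^T,$$
where $V(\mathbf{x}) := [x_i^{j-1}]_{i,j=1}^n$ is the standard Vandermonde matrix. The diagonal of $D_m$ is precisely $\binom{m}{0}, \binom{m}{1}, \ldots, \binom{m}{m}$ followed by $n-m-1$ zeros, which is exactly what is needed; the zero block annihilates the last $n-m-1$ columns of each Vandermonde factor, so those columns do not enter the product. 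Consequently the theorem follows once we establish the single bidiagonal identity
$$V(\mathbf{x}) = L^{\mathbf{x}(1)}\cdots L^{\mathbf{x}(n-1)} U^{\mathbf{x}(n-1)}\cdots U^{\mathbf{x}(1)}, \qquad (\star)$$
applied to $\mathbf{x}$ and (after transposition) to $\mathbf{y}$.

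\textbf{Proof of} $(\star)$. Set $\mathcal{L} := L^{\mathbf{x}(1)}\cdots L^{\mathbf{x}(n-1)}$ and $\mathcal{U} := U^{\mathbf{x}(n-1)}\cdots U^{\mathbf{x}(1)}$; I would handle these separately, following the same template as the proof of Theorem~\ref{thm: bidiagonal decomposition}. Since right-multiplication by $U_i(s)$ adds $s$ times column $i-1$ to column $i$, downward induction on $k = n-1, n-2, \ldots, 1$ applied to the partial products $U^{\mathbf{x}(n-1)}\cdots U^{\mathbf{x}(k)}$ should yield the closed form
$$\mathcal{U}_{ij} = \prod_{t=1}^{i-1}(x_i-x_t)\cdot h_{j-i}(x_1,\ldots,x_i) \qquad (i \le j),$$
with $\mathcal{U}_{ij} = 0$ for $i > j$, where $h_r$ is the complete homogeneous symmetric polynomial of degree $r$. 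Analogously, using that left-multiplication by $L_i(s)$ adds $s$ times row $i-1$ to row $i$, induction on $k = 1, 2, \ldots, n-1$ should give
$$\mathcal{L}_{ij} = \prod_{t=1}^{j-1}\frac{x_i-x_t}{x_j-x_t} \qquad (i \ge j),$$
and $\mathcal{L}_{ij} = 0$ for $i < j$. These are precisely the two factors of the classical Björck--Pereyra-type $LU$ decomposition of $V(\mathbf{x})$, so $\mathcal{L}\mathcal{U} = V(\mathbf{x})$ by a direct Lagrange-interpolation identity (this is already visible in the $n=3$ hand computation, where $\mathcal{U}_{23} = x_2^2 - x_1^2 = (x_2-x_1)(x_1+x_2)$, matching the conjectured formula). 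Substituting $(\star)$ (and its $\mathbf{y}$-analogue) into $S^{\circ m} = V(\mathbf{x})\, D_m\, V(\mathbf{y})^T$ then yields the stated decomposition.

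\textbf{Main obstacle.} The principal difficulty will be the bookkeeping required to collapse the telescoping products hidden in the off-diagonal entries of $L^{\mathbf{x}(k)}$ and $U^{\mathbf{x}(k)}$. Each inductive step must reduce expressions of the form
$$\prod_{t=0}^{k-n+i-2}\frac{x_i-x_{i-1-t}}{x_{i-1}-x_{i-2-t}} \quad \text{and} \quad x_{k-n+i+1}\prod_{t=1}^{k-n+i}\frac{x_i-x_{i-t}}{x_{i+1}-x_{i+1-t}}$$
to the appropriate entry of the (partially formed) closed-form matrix. Each collapse uses a telescoping identity in the spirit of the partial-fraction sum that appeared in the proof of Theorem~\ref{thm: bidiagonal decomposition}, but adapted to multiplicative telescoping. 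Once these algebraic reductions are verified, the theorem follows by the substitution above.
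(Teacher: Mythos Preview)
Your reduction $S^{\circ m}=V(\mathbf{x})\,D_m\,V(\mathbf{y})^T$ via the binomial theorem is exactly the paper's first step, and the remaining identity $(\star)$ is exactly what the paper uses as well; the approaches coincide. The only difference is that the paper does not prove $(\star)$ but simply cites it as \cite[Theorem~3.1]{orucc2000explicit} (Oru\c{c}--Phillips), so your inductive sketch for $\mathcal{L}$ and $\mathcal{U}$ is extra work reproducing a known factorization.

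One cautionary remark on that sketch: the matrices $U^{\mathbf{x}(k)}$ defined just before the theorem are \emph{not} elementary bidiagonal matrices of the form $U_i(s)$ used in Theorem~\ref{thm: bidiagonal decomposition}; they are full upper bidiagonal with non-unit diagonal entries $x_i-x_{n-k}$ in positions $i>n-k$. So the sentence ``right-multiplication by $U_i(s)$ adds $s$ times column $i-1$ to column $i$'' does not directly describe multiplication by $U^{\mathbf{x}(k)}$, and the induction template from Theorem~\ref{thm: bidiagonal decomposition} would need to be adapted accordingly (each step is a column operation \emph{plus} a rescaling of several columns). Your closed forms for $\mathcal{L}_{ij}$ and $\mathcal{U}_{ij}$ are the correct targets, but if you carry this out you should either cite Oru\c{c}--Phillips or track the diagonal scalings explicitly.
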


\begin{proof}
Let $V_\mathbf{x}$ be the Vandermonde matrix given by
$$V_\mathbf{x}=\left[x_{i}^{j-1}\right]=\begin{bmatrix}
    1 & x_1 & \cdots &x_1^{n-1}\\
    1 & x_2 & \cdots & x_2^{n-1}\\
    \vdots & \vdots & \ddots & \vdots \\
    1&x_n&\cdots & x_n^{n-1}
    \end{bmatrix}$$ and let $V_\mathbf{y}$ be defined analogously. 
We note that
\begin{align*}
    (V_\mathbf{x}D_m V_\mathbf{y}^{T})_{ij}&=\sum_{k=1}^{n}(V_\mathbf{x})_{ik}(D_m V_{\mathbf{y}}^{T})_{kj}\\
    &=\sum_{k=1}^{n}x_{i}^{k-1}\binom{m}{k-1}y_{j}^{k-1}\\
    &=\sum_{k=0}^{m}\binom{m}{k}x_{i}^{k}y_{j}^{k}\\
    &=(1+x_iy_j)^{m}.
\end{align*}
Thus 
\begin{equation} \label{eqn: def of Vandermonde}
    S^{\circ m}= V_\mathbf{x}D_m V_\mathbf{y}^{T}.
\end{equation}
Further, by \cite[Theorem 3.1]{orucc2000explicit}, the Vandermonde matrices $V_x$ and $V_{\mathbf{y}}$ can be factorized as follows:
\begin{equation}V_\mathbf{x} = L^{\mathbf{x}(1)}L^{\mathbf{x}(2)}\cdots L^{\mathbf{x}(n-1)}U^{\mathbf{x}(n-1)}U^{\mathbf{x}(n-2)}\cdots U^{\mathbf{x}(1)}\label{xx}\end{equation}
and
\begin{equation}V_\mathbf{y} = L^{\mathbf{y}(1)}L^{\mathbf{y}(2)}\cdots L^{\mathbf{y}(n-1)}U^{\mathbf{y}(n-1)}U^{\mathbf{y}(n-2)}\cdots U^{\mathbf{y}(1)}.\label{xxx}\end{equation}
Substituting \eqref{xx} and \eqref{xxx} in \eqref{eqn: def of Vandermonde}, we get the desired result. 
\end{proof}

%In Section $\ref{section: Bidiagonal decomposition}$, we give the bidiagonal decompositions for the matrices $S$ and its Hadamard powers. In Section \ref{sec: mean matrices}, we prove the above result for mean matrices. Section $\ref{sec:  remarks}$ is devoted to some remarks.

%\section{Mean matrices} \label{section: total positivity}Let $0<\la_1< \cdots <\la_n.$ We first consider the mean matrices %$\left[\bigg(\frac{1}{\text{A}(\la_i,\la_j)}\bigg)^r \right]$, $\left[\bigg(\frac{1}{\text{H}(\la_i,\la_j)}\bigg)^r \right]$, $\left[\bigg(\frac{1}{H_\nu(\la_i,\la_j)} \bigg)^r \right]$ and $\left[\bigg(\frac{1}{B_\alpha (\la_i,\la_j)}\bigg)^r \right],$ where $\alpha,r>0, \, 0\leq \nu \leq 1,\, \nu \neq \frac{1}{2}$ and

%whose $(i,j)$th entries are \begin{align*} \frac{1}{\mathcal{A}(\la_i,\la_j)} &= \frac{2}{\la_i + \la_j},\\ \frac{1}{\mathcal{H}(\la_i,\la_j)} &= \frac{2\la_i\la_j}{\la_i+\la_j},\\ \frac{1}{\mathcal{H}_\nu(\la_i,\la_j)} &= \frac{2}{\la_i^{\nu}(\la_i^{1-2\nu}+\la_j^{1-2\nu})\la_j^{\nu}} \,\,\text{for}\,\, 0\leq \nu \leq 1, \nu \neq \frac{1}{2}\\ \intertext{and    \frac{1}{\mathcal{B}_\alpha (\la_i,\la_j)} &=   \left(\frac{2}{\la_i^{\alpha}+\la_j^{\alpha}}\right)^{\frac{1}{\alpha}},\,\text{where}\,\, \alpha >0.\end{align*}
 
\section{Mean matrices} \label{sec3}

 %This section explores the total positivity of Hadamard powers of another important set of matrices, namely {the mean matrices}. 
 We first note the below easy proposition about the Cauchy matrix $C = \left[\displaystyle\frac{1}{\la_{i}+\la_{j}}\right]$, where $0<\la_1<\cdots<\la_n$ are positive real numbers. The matrix $C$ is known to be infinitely divisible (see \cite{bhatia2006infinitely}). %and totally positive \cite[page 92]{pinkus2010totally}.
\begin{pro} \label{Cauchy hadamard is TP}
For $r>0$, $C^{\circ r}$ is totally positive.
 \end{pro}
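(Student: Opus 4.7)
The plan is to exploit the standard integral representation
\[
\frac{1}{(\lambda_i+\lambda_j)^r}=\frac{1}{\Gamma(r)}\int_0^\infty t^{r-1}\,e^{-\lambda_i t}\,e^{-\lambda_j t}\,dt,\qquad r>0,
\]
which writes the entry $(C^{\circ r})_{ij}$ as an inner product of the two exponentials $e^{-\lambda_i t}$ and $e^{-\lambda_j t}$ against the measure $d\mu(t)=\Gamma(r)^{-1}t^{r-1}\,dt$ on $(0,\infty)$. This reduces the problem to checking positivity of minors of a matrix of integrals, which is ideally suited to the basic composition formula (the integral Cauchy--Binet / Andr\'eief identity).

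The main steps I would carry out in order are: (i) fix any index sets $i_1<\cdots<i_m$ and $j_1<\cdots<j_m$ with $m\le n$, and apply Andr\'eief's identity to obtain
\[
\det\bigl[(C^{\circ r})_{i_k,j_\ell}\bigr]_{k,\ell=1}^m=\frac{1}{m!}\int_{(0,\infty)^m}\det\bigl[e^{-\lambda_{i_k}t_\ell}\bigr]\,\det\bigl[e^{-\lambda_{j_k}t_\ell}\bigr]\,\prod_{\ell=1}^m d\mu(t_\ell);
\]
(ii) observe, via the substitution $u_\ell=e^{-t_\ell}$, that each of the two determinants in the integrand is a generalized Vandermonde determinant $\det[u_\ell^{\lambda_{i_k}}]$ with strictly increasing exponents $\lambda_{i_1}<\cdots<\lambda_{i_m}$, hence has a fixed sign depending only on the ordering of $t_1,\ldots,t_m$ (and not on the index set); (iii) conclude that the product of the two determinants is pointwise nonnegative on $(0,\infty)^m$, and is strictly positive on the open set where the $t_\ell$'s are pairwise distinct.

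Since the weight $t_\ell^{r-1}$ is positive for $r>0$ and the integrand is nonnegative and not identically zero, the integral is strictly positive. As $m$ and the index sets were arbitrary, every minor of $C^{\circ r}$ is positive, so $C^{\circ r}$ is totally positive.

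The only delicate point, and the step I would be most careful about, is the sign analysis in (ii): one must verify that the signs of $\det[e^{-\lambda_{i_k}t_\ell}]$ and $\det[e^{-\lambda_{j_k}t_\ell}]$ match for every permutation of $t_1,\ldots,t_m$, so that their product is uniformly nonnegative across $(0,\infty)^m$ rather than changing sign. This follows because the sign depends only on the ordering of the $t_\ell$'s (through the reordering of the columns of a positive Vandermonde), not on the particular $\lambda$-values, so both factors pick up exactly the same sign and their product is a nonnegative function on $(0,\infty)^m$.
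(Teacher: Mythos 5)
Your proof is correct, but it takes a genuinely different route from the paper. You reduce the problem to the integral representation $\frac{1}{(\lambda_i+\lambda_j)^r}=\Gamma(r)^{-1}\int_0^\infty t^{r-1}e^{-\lambda_i t}e^{-\lambda_j t}\,dt$, apply Andr\'eief's identity to each minor, and then use the classical fact that generalized Vandermonde determinants $\det[u_\ell^{\lambda_{i_k}}]$ with $0<u_1<\cdots<u_m$ and increasing exponents are strictly positive; the key sign observation in your step (ii) is exactly right, since both determinants in the integrand pick up the same permutation sign from the ordering of $t_1,\ldots,t_m$. This is the classical kernel-theoretic route (basic composition formula plus extended Tchebyshev systems) and is fully self-contained once one grants Andr\'eief and the sign of the generalized Vandermonde. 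The paper argues differently: it invokes the nonsingularity of $[(1+q_j/p_i)^{-r}]$ from Jain's work to show that no minor of $C^{\circ r}$ can vanish as $(p_i,q_j,r)$ vary, then uses continuity and connectedness of the parameter space to conclude that each minor keeps the sign it has at the single test point $r=1$, $p_i=i$, $q_j=j$ (the Hilbert matrix), where positivity is known. The paper's proof is shorter given Jain's nonsingularity result but relies on it as a black box; your argument is longer to write out in full detail but is structurally independent of that external input and exhibits why the positivity holds rather than merely deducing it by a sign-tracking argument.
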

\begin{proof}
Let $r>0$. Every minor of $C^{\circ r}$ is of the form $\det\left(\left[\displaystyle\frac{1}{(p_{i}+q_j)^r}\right]\right),$ where $0<p_1 < \cdots <p_n$ and $0<q_1< \cdots < q_n$. We have $\displaystyle\frac{1}{\left(p_i+q_j\right)^r} = \displaystyle\frac{1}{p_i^r}\displaystyle\frac{1}{\big(1+(q_j/p_i)\big)^r}$. % Let $D = \text{diag}\left[\frac{1}{p_i}\right].$ Then $\left[\frac{1}{(p_i+q_j)^r}\right] = D^{\circ r}\left[\frac{1}{\big(1+\frac{q_j}{p_i}\big)^r}\right]$. 
 Since $\left[\displaystyle\frac{1}{\big(1+(q_j/p_i)\big)^r}\right]$ is nonsingular (see \cite[Corollary 5]{jain2020hadamard}), so is $\left[\displaystyle\frac{1}{(p_i+q_j)^r}\right]$. Therefore, $\det \left(\left[\displaystyle\frac{1}{(p_i+q_j)^r}\right]\right)\neq 0$ for every $r>0$  and for every $0<p_1<\cdots<p_n, 0<q_1<\cdots<q_n$. The map $(p_1,\ldots,p_n,q_1,\ldots,q_n,r)\mapsto \det \left(\left[\displaystyle\frac{1}{(p_i+q_j)^r}\right]\right)$ is a continuous function of its variables. So by the intermediate value theorem, 
 it retains its sign for all choices of $0<p_1< \cdots < p_n, 0<q_1 < \cdots < q_n$ and $r>0.$ For $r=1,p_i = i$ and $q_j = j,$ we have, $\det \left(\left[\displaystyle\frac{1}{i+j}\right]\right)>0$ (see \cite[p. $92$]{pinkus2010totally}). Thus $\det\left(\left[\displaystyle\frac{1}{(p_{i}+q_j)^r}\right]\right)>0$. So $C^{\circ r}$ is totally positive.
\end{proof}
\noindent We remark that the total positivity of Hadamard powers of  Pascal matrices is shown in \cite[Remark 2.2]{grover2020positivity}.\\
\par

The main theorem of this section is as below. The proof is similar to \cite{bhatia2007mean}, where their infinite divisibility is discussed.

\begin{theorem} \label{thm:mean}
Let $r>0$. The matrices
 $\left[\displaystyle\frac{1}{\mathcal{A}(\la_i,\la_j)^r}\right],\left[H(\la_i,\la_j)^r\right],\\\left[\displaystyle\frac{1}{\mathcal{H}_\nu(\la_i,\la_j)^r}\right]$ ($\nu\neq \frac{1}{2}$) and $\left[\displaystyle\frac{1}{\mathcal{B}_\alpha (\la_i,\la_j)^r}\right]$($0<\alpha<\infty$)  are totally positive. The matrices
    $\left[\displaystyle\frac{1}{\mathcal{H}_{\frac{1}{2}}(\la_i,\la_j)^r}\right]$ $\left(= \left[\displaystyle\frac{1}{\mathcal{B}_0 (\la_i,\la_j)^r}\right]\right)$ and $\left[\displaystyle\frac{1}{\mathcal{B}_\infty (\la_i,\la_j)^r}\right]$ are totally nonnegative.
\end{theorem}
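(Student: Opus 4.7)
The plan is to adapt the Bhatia--Kittaneh argument of~\cite{bhatia2007mean} for infinite divisibility, substituting Proposition~\ref{Cauchy hadamard is TP} in place of the classical fact that the Cauchy matrix is positive semidefinite. The key observation I would exploit is that for each ``Cauchy-type'' mean $M$ in the statement, the matrix $[1/M(\lambda_i,\lambda_j)^r]$ (or $[H(\lambda_i,\lambda_j)^r]$) can be written in the form
\[
D_1 \,\big[(\mu_i+\mu_j)^{-s}\big]_{i,j=1}^n \, D_2,
\]
where $s>0$, $D_1$ and $D_2$ are positive diagonal matrices, and $0<\mu_1<\cdots<\mu_n$ is some increasing transform of $(\lambda_i)$. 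Since conjugation by positive diagonal matrices multiplies every minor by a positive scalar, total positivity is preserved, and the inner Cauchy Hadamard power is totally positive by Proposition~\ref{Cauchy hadamard is TP}.

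Concretely, I would handle the four totally positive cases in turn. For the arithmetic mean, $[1/\mathcal{A}(\lambda_i,\lambda_j)^r]=2^r[1/(\lambda_i+\lambda_j)^r]$. For the harmonic mean, the identity $H(a,b)^r=2^r(ab)^r(a+b)^{-r}$ yields $[H(\lambda_i,\lambda_j)^r]=2^r \diag(\lambda_i^r)\,[1/(\lambda_i+\lambda_j)^r]\,\diag(\lambda_j^r)$. For the Heinz mean with $\nu\neq 1/2$, I would factor
\[
a^\nu b^{1-\nu}+a^{1-\nu}b^\nu=(ab)^{\min(\nu,1-\nu)}\bigl(a^\gamma+b^\gamma\bigr),\qquad \gamma:=|1-2\nu|>0,
\]
and then substitute $\mu_i=\lambda_i^\gamma$, which is still increasing, to bring $[1/\mathcal{H}_\nu(\lambda_i,\lambda_j)^r]$ into the desired form. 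For $\mathcal{B}_\alpha$ with $0<\alpha<\infty$, the identity $\mathcal{B}_\alpha(a,b)^r=2^{-r/\alpha}(a^\alpha+b^\alpha)^{r/\alpha}$ combined with $\mu_i=\lambda_i^\alpha$ reduces the matrix to $2^{r/\alpha}[1/(\mu_i+\mu_j)^{r/\alpha}]$, again totally positive by Proposition~\ref{Cauchy hadamard is TP}.

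The two exceptional cases must be treated separately. The matrix $[1/\mathcal{H}_{1/2}(\lambda_i,\lambda_j)^r]=[1/\mathcal{B}_0(\lambda_i,\lambda_j)^r]=[(\lambda_i\lambda_j)^{-r/2}]$ equals $uu^T$ with $u=(\lambda_i^{-r/2})$; it has rank one with positive entries, hence is totally nonnegative but not totally positive. For $\mathcal{B}_\infty$, I would use $1/\max(\lambda_i,\lambda_j)^r=\min(\nu_i,\nu_j)$ with $\nu_i:=\lambda_i^{-r}$ decreasing, and the telescoping identity
\[
\min(\nu_i,\nu_j)=\nu_{\max(i,j)}=\sum_{k=1}^n(\nu_k-\nu_{k+1})\,\mathbf{1}[k\geq i]\,\mathbf{1}[k\geq j],\qquad \nu_{n+1}:=0,
\]
to obtain the factorization $[1/\mathcal{B}_\infty(\lambda_i,\lambda_j)^r]=U\,\diag(\nu_k-\nu_{k+1})\,U^T$, where $U$ is the upper-triangular matrix with $U_{ik}=1$ for $k\geq i$ and $0$ otherwise. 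Since $U$ and the nonnegative diagonal factor are both totally nonnegative, so is the product. I expect the $\mathcal{B}_\infty$ case to be the main obstacle, since it is the only one lying outside the unified Cauchy framework and requires this separate bidiagonal-style decomposition.
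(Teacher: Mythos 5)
Your proposal is correct and follows essentially the same route as the paper: for each non-degenerate mean you conjugate by a positive diagonal matrix to reduce to a Hadamard power of a Cauchy matrix in a new increasing variable (using exactly the substitutions the paper uses), invoke Proposition~\ref{Cauchy hadamard is TP}, observe that $\mathcal{B}_0=\mathcal{H}_{1/2}$ gives a rank-one outer product hence a totally nonnegative matrix, and reduce $\mathcal{B}_\infty$ to the total nonnegativity of a min-matrix. The only difference is cosmetic and occurs at $\mathcal{B}_\infty$: the paper reverses the index order (citing Proposition~1.3 of \cite{pinkus2010totally}) so as to work with an increasing sequence and then gives an $L'U'$ factorization of $[\min(\mu_i,\mu_j)]$, whereas you stay with the decreasing $\nu_i=\lambda_i^{-r}$ and factor $[\min(\nu_i,\nu_j)]=U\,\mathrm{diag}(\nu_k-\nu_{k+1})\,U^T$ with $U$ the upper-triangular all-ones matrix --- both are one-line factorizations into totally nonnegative triangular and diagonal pieces, so this is a matter of presentation rather than a genuinely different argument.
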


 %For  let $F_{\alpha} =  \left[\frac{1}{\mathcal{F}_\alpha (\la_{i},\la_{j})}\right].$ It is known that $F_{\alpha}$ is infinitely divisible for $\frac{1}{2}\leq \alpha \leq 1$ (see \cite{bhatia2007mean}). We show that  \parTo prove Theorem \ref{thm:mean},
%Alternatively, first we see that the matrix $\left[\frac{1}{(\la_i+\mu_j)^r}\right]$ is nonsingular for all values of \textcolor{red}{positive real numbers} $\la_1 < \ldots < \la_n , \mu_1 < \ldots < \mu_n$ and for $r>0.$ Suppose it is singular for some $r>0$ and \textcolor{red}{positive real numbers} $\la_1 < \cdots < \la_n , \mu_1 < \cdots < \mu_n,$ then there exists real numbers $c_1,\ldots, c_n,$ not simultaneously zeros, such that\begin{align*}\sum_{j=1}^{n} \frac{c_j}{(\la_i+\mu_j)^r}=0  \quad \quad \text{for} \, \, i = 1,\ldots, n.\end{align*} This gives that either the function $f_r(x): = \sum_{j=1}^{n} \frac{c_j}{(x+\mu_j)^r}$ is identically zero \textcolor{red}{(which is not, why?)} or has at least $n$ zeros in $(0,\infty),$ which is not possible by \cite[Proposition $2.3$]{jain2017hadamard}. 

\begin{proof}
Since $\left[\displaystyle\frac{1}{\mathcal{A}(\la_i,\la_j)}\right]$ is a Cauchy matrix, the total positivity of its Hadamard powers follows from Proposition \ref{Cauchy hadamard is TP}. The total positivity and total nonnegativity of a matrix are preserved under multiplication by a diagonal matrix with positive diagonal entries. Note that \begin{align*}
\left[H(\la_i,\la_j)\right] &= \text{diag}\left[\sqrt{2}\la_i\right]\left[\displaystyle\frac{1}{\la_i+\la_j}\right]\text{diag}\left[\sqrt{2}\la_i\right]
\intertext{and}
\left[\frac{1}{\mathcal{B}_\alpha(\la_i,\la_j)}\right] &= \text{diag}\left[2^{1/\alpha}\right]\left[\displaystyle\frac{1}{(\la_i^{\alpha}+\la_j^{\alpha})^{1/\alpha}}\right].
\end{align*} Thus by Proposition \ref{Cauchy hadamard is TP}, $\left[H(\la_i,\la_j)^r\right]$ is totally positive and so is $\left[\displaystyle\frac{1}{\mathcal{B}_\alpha(\la_i,\la_j)^r}\right]$ for $0<\alpha<\infty$. Similarly, since we have 
\begin{align*}
    \left[\displaystyle\frac{1}{\mathcal{H}_\nu(\la_i,\la_j)}\right] &=\diag \left[\displaystyle\frac{1}{\la_i^{\nu}}\right] \left[\displaystyle\frac{2}{\la_i^{1-2\nu}+\la_j^{1-2\nu}}\right] \diag \left[\displaystyle\frac{1}{\la_i^{\nu}}\right],
\end{align*}
we get that $\left[\displaystyle\frac{1}{\mathcal{H}_\nu(\la_i,\la_j)^r}\right]$ is totally positive for $0\leq \nu<\displaystyle\frac{1}{2}$. Since
$\mathcal{H}_{\nu}$ is symmetric about $\nu=\displaystyle\frac{1}{2}$, the Hadamard powers of $\left[\displaystyle\frac{1}{\mathcal{H}_\nu(\la_i,\la_j)}\right]$ are also totally positive for $\displaystyle\frac{1}{2}< \nu \leq 1$.  
%For $\nu=1/2$ or $\alpha=0$,
Also, 
$$\left[\displaystyle\frac{1}{\mathcal{H}_{\frac{1}{2}}(\la_i,\la_j)^r}\right] =\left[\displaystyle\frac{1}{\mathcal{B}_0 (\la_i,\la_j)^r}\right] = \text{diag}\left[\displaystyle\frac{1}{(\sqrt{\la_i})^r}\right]F\,\text{diag}\left[\displaystyle\frac{1}{(\sqrt{\la_i})^r}\right],$$ where $F$ is the \emph{flat matrix} with all its entries equal to $1$. Thus it is totally nonnegative. The matrix $$\left[\displaystyle\frac{1}{\mathcal{B}_\infty (\la_i,\la_j)^r}\right] = \left[\displaystyle\frac{1}{\text{max}(\la_{i}^r,\la_{j}^r)}\right] = \left[\text{min}\left(\displaystyle\frac{1}{\la_i^r},\frac{1}{\la_j^r}\right)\right].$$ So the $(i,j)$th entry of $\left[\displaystyle\frac{1}{\mathcal{B}_\infty (\la_i,\la_j)^r}\right]$ is the $(n+1-i,n+1-j)$th entry of $\left[\text{min}\left(\displaystyle\frac{1}{\la_{n+1-i}^r},\frac{1}{\la_{n+1-j}^r}\right)\right]$. %Since $0<\frac{1}{\la_{n}^r}<\frac{1}{\la_{n-1}^r}<\cdots<\frac{1}{\la_{1}^r}$, 
In view of Proposition 1.3 of \cite{pinkus2010totally}, it is enough to show that if $0<\mu_1<\cdots<\mu_n$, then $\left[\min(\mu_i,\mu_j)\right]$ is totally nonnegative. 
\noindent To see this, let $L^{'}=[L^{'}_{ij}]$ and $U^{'}=[U^{'}_{ij}]$ be defined as 

%A decomposition for the matrix $[\text{min}(i,j)]$ is given on page $17$ of \cite{bhatia2006infinitely}, which shows that it is totally nonnegative. A generalization of this decomposition for $0<\la_1<\cdots < \la_n$ can be given as 

 $$L^{'}_{ij} = \begin{cases}\mu_1 &\text{if}\,\, i\geq j=1,\\(\mu_j-\mu_{j-1}) &\text{if}\,\, i\geq j\geq 2,\\0 &\text{otherwise}\end{cases}$$ and $$U^{'}_{ij} = \begin{cases}1 &\text{if}\,\, i\leq j,\\0 &\text{otherwise}.\end{cases}$$
Then $L^{'}$ and $U^{'}$ are totally nonnegative lower and upper triangular matrices, respectively, and $$[\text{min}(\mu_i,\mu_j)]=L^{'}U^{'}.$$

\noindent (For $\mu_i=i$, this decomposition is given in \cite{bhatia2006infinitely}.) Thus, $\left[\min(\mu_i,\mu_j)\right]$ is totally nonnegative. %Thus the matrix $[\text{min}(\mu_i,\mu_j)]$ is also totally nonnegative. Let $J$ be the exchange matrix of order $n$ given by $J_{ij}= 1$ if $j=n-i+1$ and $0$ otherwise. If $A$ is any totally nonnegative matrix of order $n$ then so is $JAJ$. Hence the matrix $[\text{min}(\la_i,\la_j)]$ is totally nonnegative even if $\la_i$'s are in decreasing order. Since the matrix $\left[\frac{1}{\mathcal{B}_\infty (\la_i,\la_j)^r}\right] = \left[\frac{1}{\text{max}(\la_{i}^r,\la_{j}^r)}\right] = \left[\text{min}\left(\frac{1}{\la_i^r},\frac{1}{\la_j^r}\right)\right]$, it is totally nonnegative.
\end{proof} 

\section{Remarks}\label{remarks} 
%Theorem \ref{thm:mean} is stronger than this.
%The function $F_{\alpha}(a,b) = (1-\alpha) \sqrt{ab}+ \alpha \big(\frac{a+b}{2}\big),$ where $0\leq \alpha \leq 1,$ is called the \emph{Heron mean}. For positive real numbers $p_1,\ldots , p_n,$ 

%\begin{pro}  For $t>0$ and $r>0,$ $H_{t}^{\circ r}$ is $\text{TP}_{2}.$\end{pro}
\begin{enumerate}
    \item Note that for $0<\alpha < \infty$, $\mathcal{B}_{-\alpha}(a,b) = \displaystyle\frac{ab}{\mathcal{B}_{\alpha}(a,b)}$. Thus $\left[\mathcal{B}_{-\alpha}(\la_i,\la_j)^r\right]$ is totally positive for $0< \alpha < \infty$. Also, $\left[\mathcal{B}_{-\infty}(\la_i,\la_j)^r\right]=[\textup{min}(\la_i^r,\la_j^r)]$ is totally nonnegative.
    \item In Theorem \ref{thm: bidiagonal decomposition}, the decomposition holds for all real numbers $x_1,\ldots,x_n$ and $y_1,\ldots, y_n$ such that $1+x_1y_j,1+x_iy_1$ are nonzero for $1\leq i,j\leq n$, and $x_i-x_{i-1}, y_i-y_{i-1}$ are nonzero for $2\leq i \leq n$. In particular, it holds for all distinct positive real numbers.
\end{enumerate} %So $\left[\mathcal{B}_{-\alpha}(\la_i,\la_j)\right]$

\vspace{0.5cm}

\textbf{Acknowledgement:} \textit{It is a pleasure to record our sincere thanks to Apoorva Khare for his insightful comments that helped to improve our results. We are thankful to Shaun M. Fallat for directing us to the preprint \cite{khare2020multiply}.}

\end{document}